\theoremstyle{definition}
\newtheorem{defin}{Definition}[section]
\newtheorem{oss}[defin]{Remark}
\newtheorem{prop}[defin]{Proposition}
\newtheorem{teo}[defin]{Theorem}
\newcommand\simtimes{\mathbin{%
    \stackrel{\sim}{\smash{\times}\rule{0pt}{0.6ex}}%
    }}
\begin{document}

\begin{center}
{\large \textbf{SOME HYPERBOLIC 4-MANIFOLDS WITH LOW VOLUME AND NUMBER OF CUSPS}

\vspace{0.4 in}}

\textnormal{LEONE SLAVICH}

leone.slavich@gmail.com
\end{center}

\noindent ABSTRACT: We construct here two new examples of non-orientable, non-compact, hyperbolic $4$-manifolds. The first has minimal volume $v_m=4\pi^2/3$ and two cusps. This example has the lowest number of cusps among known minimal volume hyperbolic $4$-manifolds. 
The second has volume $2\cdot v_m$ and one cusp. It has lowest volume among known one-cusped hyperbolic $4$-manifolds.

\section{Introduction}
As proven by Ratcliffe and Tschanz in \cite{ratcliffetschanz2}, the volume spectrum of hyperbolic $4$-manifolds is the set of integral multiples of 
$v_m=4\pi^2/3$. Regarding volume as a measure of the complexity of a hyperbolic manifold, it is desirable to have as many examples as possible of minimal volume hyperbolic $4$-manifolds. 

The largest known census of minimal-volume cusped hyperbolic $4$-manifolds is contained in \cite{ratcliffetschanz2}, and consists of $1171$ distinct manifolds, all obtained by pairing the facets of one copy of the regular ideal hyperbolic $24$-cell. All these manifolds have five or six cusps.
In section \ref{2cuspidi}, we will prove the following:
\begin{teo}\label{minvol}
There exists a minimal volume, non-orientable, hyperbolic $4$-manifold with two cusps. \end{teo}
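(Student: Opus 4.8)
The plan is to realize the manifold as the quotient of a single regular ideal hyperbolic $24$-cell $P \subset \mathbb{H}^4$ by an explicit side-pairing of its twenty-four octahedral facets, and then to verify the hypotheses of Poincar\'e's polyhedron theorem. Since $P$ has volume exactly $v_m = 4\pi^2/3$, any manifold obtained by gluing up a single copy of $P$ automatically attains the minimal volume in the spectrum of \cite{ratcliffetschanz2}, so the entire content of the theorem lies in producing a side-pairing that simultaneously (i) yields a manifold, (ii) has exactly two cusps, and (iii) is non-orientable.

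First I would record the relevant combinatorics of $P$: it has $24$ ideal vertices, $24$ octahedral facets, and $96$ triangular ridges (codimension-two faces), with dihedral angle $2\pi/3$ along every ridge. Because this angle equals $2\pi/3$, the local manifold condition forces the ridges to be identified in triples, so that the total angle around each resulting codimension-two stratum is $2\pi$. Moreover, the vertex figure of $\{3,4,3\}$ is a cube, so the horospherical cross-section at each ideal vertex is a Euclidean cube; under the gluing these $24$ cubes must assemble into closed flat $3$-manifolds, one per cusp, and this is precisely what guarantees completeness of the quotient.

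Next I would exhibit a concrete pairing $\Phi = \{\phi_1,\dots,\phi_{12}\}$ matching the $24$ facets in pairs by isometries, chosen to exploit the large symmetry group of $P$ so as to keep the verification uniform; at least one $\phi_i$ would be taken orientation-reversing in order to target non-orientability. With $\Phi$ fixed, the proof reduces to three checks. One traces the ridge cycles under $\Phi$ and confirms that each consists of exactly three triangular ridges whose dihedral angles sum to $2\pi$, with the associated cycle relations holding, so that Poincar\'e's conditions produce a complete hyperbolic manifold. A second partitions the $24$ ideal vertices into equivalence classes under the identifications induced by $\Phi$, showing there are exactly two classes and identifying the two resulting flat cusp sections obtained from the glued cubes. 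A third exhibits a loop along which $\Phi$ reverses orientation, establishing non-orientability.

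I expect the main obstacle to be the design of $\Phi$ itself: finding a single side-pairing that meets the ridge-cycle/link conditions, collapses the twenty-four vertices into precisely two classes, and is non-orientable is a tightly constrained combinatorial problem, since the three requirements pull against one another. The symmetry of the $24$-cell should cut down the number of independent cases, but carrying out the ridge-cycle verification and the vertex-identification count without error, and confirming that the cross-sections close up to compact Euclidean $3$-manifolds, will be the technical heart of the argument.
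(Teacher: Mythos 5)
Your high-level strategy (pair the facets of a single ideal $24$-cell, verify manifoldness via Poincar\'e/cusp sections, count vertex classes) is indeed the same skeleton as the paper's, but there are two problems, one factual and one structural. The factual one: the regular ideal hyperbolic $24$-cell does \emph{not} have dihedral angle $2\pi/3$ along its ridges --- that is the dihedral angle of the \emph{Euclidean} $24$-cell. Once the vertices are pushed to the sphere at infinity, the vertex figure becomes a Euclidean cube and every dihedral angle becomes $\pi/2$; this right-angledness is precisely why the $24$-cell is usable here at all (the paper notes it is the only regular ideal $4$-polytope whose dihedral angle is a submultiple of $2\pi$). Consequently the ridges must be identified in cycles of \emph{four}, not triples, and the verification as you set it up --- demanding that each ridge cycle consist of exactly three ridges with angles summing to $2\pi$ --- is unsatisfiable and would fail at the very first cycle.

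The structural problem is more serious: the entire content of the theorem is the explicit side-pairing, and you never produce it --- you yourself flag its design as the main obstacle, but a proof must resolve it rather than defer it. The paper avoids the intractable global search over pairings of all $24$ facets by a staged construction that exploits the red/green/blue coloring of the facets: the green facets are glued to their opposites by the antipodal map $v\mapsto -v$, the red facets by $H(x,y,z,w)=(-x,-y,z,w)$, producing an intermediate hyperbolic manifold $\mathcal{A}$ with eight cusps and two disjoint totally geodesic boundary components $X$ and $Y$, each assembled from the blue facets and isometric to the exterior $M_L$ of the minimally twisted $6$-chain link. The classification of $\mathrm{Isom}(M_L)$ (Proposition \ref{isometrie}, via the exact sequence $0\to\mathbb{Z}_2\oplus\mathbb{Z}_2\to \mathrm{Isom}(M_L)\to \mathrm{Isom}(\mathcal{O})\to 0$) reduces the final gluing $\phi:X\to Y$ to choosing one octahedron on each side and a vertex permutation, and $\phi$ is then selected so that the cusp sections of $\mathcal{A}$ concatenate along their boundary tori into exactly two chains of four, with the mute cusps at the ends. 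Manifoldness and completeness are checked at each stage by exhibiting the cusp cross-sections as closed flat $3$-manifolds (your cube-assembly criterion, which is correct), never by bare ridge-cycle bookkeeping; the two-cusp count then reduces to tracing a permutation of six vertices rather than auditing $96$ ridges. Without either this intermediate structure or an explicit $\Phi$ with verified length-four ridge cycles and a two-class vertex partition, your proposal remains a program rather than a proof.
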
 
This manifold will be constructed by pairing the facets of a regular ideal hyperbolic $24$-cell. To the best of our knowledge, the manifold of Theorem \ref{minvol} has the lowest number of cusps among known minimal volume hyperbolic $4$-manifolds.

Kolpakov and Martelli \cite{martelli} have recently constructed the first example of hyperbolic $4$-manifold with one cusp. The volume of their example is $4\cdot v_m$.
In section \ref{1cuspide}, we will prove the following:
\begin{teo}\label{2minvol} There exists a one-cusped, non-orientable, hyperbolic $4$-manifold with volume equal to $2\cdot v_m$.\end{teo}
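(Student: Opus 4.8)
The plan is to realize the desired manifold as the quotient of two copies $P_1, P_2$ of the regular ideal hyperbolic $24$-cell, so that the total volume is automatically $2\cdot v_m$, and to specify an explicit pairing of their $48$ octahedral facets. Each facet of a $24$-cell is a regular ideal octahedron, and each ideal vertex has a Euclidean cube as horospherical cross-section (the vertex figure of $\{3,4,3\}$ is the cube $\{4,3\}$), while the dihedral angle along every $2$-face equals $2\pi/3$. I would first fix an isometric identification of the $48$ facets into $24$ pairs, each realized by an isometry of $\mathbb{H}^4$ carrying one octahedron onto its partner, with the paired maps chosen to be mutually inverse.

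The verification would proceed through Poincar\'e's polyhedron theorem. Since each dihedral angle is $2\pi/3$, the manifold condition forces every ridge cycle (the orbit of a $2$-face under the facet pairings) to have length exactly three, so that the dihedral angles sum to $2\pi$ and the associated composite holonomy is the identity. I would group the $2$-faces into such cycles and check this angle-and-holonomy condition, which guarantees that the quotient carries a smooth complete hyperbolic structure away from the cusps. Completeness at infinity would then be checked by verifying that, around each ideal-vertex orbit, the horospherical cubes glue up into a closed Euclidean $3$-manifold whose holonomy lies in the group of Euclidean isometries.

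The crucial and most delicate point is the cusp count. I want the pairing to identify all $48$ ideal vertices into a single equivalence class, so that the glued cubical cross-sections form one connected closed flat $3$-manifold and the quotient has exactly one cusp. This is the main obstacle: arranging the facet identifications so that the induced permutation of ideal vertices is transitive while simultaneously respecting the length-three ridge cycles is a rigid combinatorial constraint, and it is presumably here that two copies of the $24$-cell, rather than one, become necessary. I would exploit the symmetries of the $24$-cell, and possibly the pairing already used for the two-cusped example of Theorem \ref{minvol}, to reduce the search and to certify transitivity of the vertex orbit.

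Finally, non-orientability would be established by exhibiting an orientation-reversing loop in the quotient: concretely, by checking that at least one facet-pairing isometry reverses orientation and that no relabelling of the two $24$-cells turns the whole identification into an orientation-preserving one, so that the resulting one-cusped manifold of volume $2\cdot v_m$ is non-orientable as claimed.
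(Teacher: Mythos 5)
Your overall plan---pair the $48$ octahedral facets of two copies of the ideal $24$-cell, verify completeness through closed Euclidean cusp cross-sections, and arrange the vertex identifications so that all ideal vertices fall into one class---is in spirit the paper's strategy, but the proposal contains a concrete error and leaves the actual content of the proof unsupplied. The error: the dihedral angle of the \emph{regular ideal hyperbolic} $24$-cell along its $2$-faces is $\pi/2$, not $2\pi/3$. You have quoted the Euclidean dihedral angle of the $24$-cell; in the ideal hyperbolic realization the angle is read off from the vertex figure, which is a Euclidean cube with edge angles $\pi/2$---and this is precisely what makes the $24$-cell usable here, being the only regular ideal $4$-polytope whose dihedral angle is a submultiple of $2\pi$. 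Consequently your manifold condition must require ridge cycles of length \emph{four}, not three; as written, the angle-and-holonomy check at the heart of your Poincar\'e-theorem argument would fail at every ridge.

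The gap: everything in your proposal hinges on exhibiting an explicit facet pairing with a transitive induced action on ideal vertices, and you correctly flag this as ``the main obstacle''---but then defer it to an unspecified combinatorial search, so no proof is actually given. The paper does not carry out such a search over raw facet pairings; it tames the combinatorics by a two-stage construction. First it glues the two $24$-cells along their green facets by the identity (a mirroring) and each red facet to its antipode within the same copy, obtaining an explicit hyperbolic manifold $\mathcal{D}$ with twelve cusp sections, all isometric to $T\times I$, and four totally geodesic boundary components, each isometric to the minimally twisted $6$-chain link exterior $M_L$. It then uses the classification of $\mathrm{Isom}(M_L)$ (Proposition \ref{isometrie}) to reduce the choice of each boundary-gluing isometry to a choice of octahedron plus a vertex correspondence, and picks $\phi_1,\phi_2$ so that the twelve $T\times I$ cusp sections concatenate end-to-end into a single torus cusp (a $2\times2\times12$ box with opposite faces identified, Figure \ref{trenino}). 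This staging replaces your global ridge-cycle verification by gluings of hyperbolic manifolds along totally geodesic boundary, where completeness is automatic, and turns the one-cusp count into short bookkeeping; non-orientability also comes with an explicit witness from the chosen pairing maps rather than the indirect relabelling argument you sketch. Without an explicit pairing, and with the ridge-cycle length corrected to four, your proposal is a reasonable research plan but not yet a proof of the theorem.
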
 
This example will be obtained by pairing the facets of two copies of the regular ideal hyperbolic $24$-cell.
To the best of our knowledge, this manifold of Theorem \ref{2minvol} has the lowest volume among known one-cusped hyperbolic $4$-manifolds.
\section{The regular ideal hyperbolic 24-cell}

We begin by describing the hyperbolic ideal polytope which we use to build our manifolds.

The 24-cell $C$ is the only regular polytope in all dimensions $n\geq 3$ which is self-dual and not a simplex. It is defined as the convex hull in $\mathbb{R}^4$ of the set of 24 points obtained by permuting the coordinates of $$(\pm 1, \pm 1, 0 , 0 ).$$ 
It has $24$ vertices, $96$ edges, $96$ faces and $24$ $3$-dimensional facets which lie in the affine hyperplanes of equations

$$ x_i=\pm 1, \;\;\; \pm x_1 \pm x_2 \pm x_3 \pm x_4 =2. $$
The dual polytope $C^*$ is the convex hull $$C^*=\text{Conv}\{\mathcal{R}, \mathcal{B}, \mathcal{G}\}$$ where
$\mathcal{G}$ is the set of $8$ points obtained by permuting the coordinates of $$(\pm 1,0,0,0)$$ and
$ \mathcal{R}\cup \mathcal{B}$ is the set of $16$ points of the form
$$\left(\pm \frac{1}{2}, \pm \frac{1}{2},\pm \frac{1}{2},\pm \frac{1}{2}\right)$$ with $\mathcal{R}$ (resp. $\mathcal{B}$) being the set of $8$ points with an even (resp. odd) number of minus signs. The facets of $C$ are regular octahedra in canonical one-to-one correspondence with the vertices of $C^*$, and are coloured accordingly in red, green and blue. This coloring is natural, every symmetry of the $24$-cell $C$ preserves the partition of the vertices of $C^*$ into the sets $\mathcal{R}, \mathcal{B}, \mathcal{G}$, and every permutation of $\{\mathcal{R},\mathcal{G},\mathcal{B}\}$ is realized by a symmetry of $C$.
The vertex figure is a cube, in accordance with self-duality.
Notice furthermore that the convex envelope of $\mathcal{R}\cup \mathcal{B}$ is a hypercube, while the convex envelope of $\mathcal{G}$ is a $16$-cell. 

Each octahedral facet of the $24$-cell possesses a checkerboard coloring on its triangular $2$-faces. Consider for instance a red facet $F$ of the $24$-cell $C$. Each triangular face $f$ of $F$ will be adjacent to $F$ and another octahedral facet $G$ which will be colored either in green or blue. We assign to the face $f$ the color of $G$. This induces a green/blue coloring on the triangular faces of $F$, such that if a face $f$ is adjacent to a face $g$, their colors are different. Notice that green facets will be endowed with a red/blue coloring, while the blue facets will be endowed with a red/green coloring.

Being a regular polytope, the $24$-cell has an ideal hyperbolic presentation, which we denote $\mathcal{C}$, obtained by normalizing the coordinates of its vertices so that they lie on the unit sphere $S^3\subset \mathbb{R}^4$. By interpreting $S^3$ as the boundary of hyperbolic $4$-space the    vertices of the $24$-cell correspond to  $24$ points at infinity. Their convex envelope in $\mathbb{H}^4$ is the {\em regular ideal hyperbolic $24$-cell}, denoted by $\mathcal{C}$.

The octahedral facets become isometric to regular ideal hyperbolic octahedra.
The cusp section becomes a euclidean cube and therefore all dihedral angles between facets are equal to $\pi/2$, which is a submultiple of $2\pi$. The $24$-cell is the only regular ideal $4$-dimensional polytope which has this property. This allows us to glue isometrically along their facets a finite number copies $\mathcal{C}_1,\dots,\mathcal{C}_n$ of the ideal $24$-cell so that the geometric structures on each $\mathcal{C}_i$ piece  together to produce a hyperbolic structure on the resulting manifold. Following chapter $4$ of \cite{notes}, this is equivalent to verifying that the flat cusp sections piece together to produce closed Euclidean $3$-manifolds.

\section{A minimal volume hyperbolic 4-manifold with two cusps}\label{2cuspidi}
 
In this section, we will construct the hyperbolic $4$-manifold of Theorem \ref{minvol}
We begin considering a $24$-cell $\mathcal{C}$ and we pair each green facet with its opposite using the antipodal map $v\mapsto-v$. We denote with $\mathcal{C}/\sim$ the resulting nonorientable manifold. It is convenient to introduce the following labelings on the strata of the manifold $\mathcal{C}/\sim$:
\begin{enumerate}
\item The manifold $\mathcal{C}/\sim $ has $12$ cusps, in one-to-one with the pairs of opposite vertices of the $24$-cell. Each vertex of the $24$-cell is obtained from a point of the form $(\pm 1,\pm 1,0,0)$ through permutation of the coordinates. It is therefore natural to label each cusp with a $4$-tuple of $+,0,-$ symbols, with two zero entries, defined up to change in signs.
\item There are eight $3$-strata, each corresponding to a pair of opposite {\em red or blue} facets of the $24$-cell $\mathcal{C}$. The red and blue facets lie in hyperplanes of equation $\pm x_1 \pm x_2 \pm x_3 \pm x_4 =2$. We can therefore label each $3$-stratum with a $4$-tuple of $+,-$ symbols,
defined up to change in signs. Each $3$-stratum inherits the coloring of the octahedral facets of the $24$-cell which compose it.
\end{enumerate} 

As a second step, we pair the red facets (those with an even number of minus signs in their labels) through the map $H:\mathbb{R}^4\rightarrow \mathbb{R}^4$ defined by $$H(x,y,z,w)=(-x,-y,z,w).$$ Let's call $\mathcal{A}$ the resulting nonorientable object. 

\begin{prop}
The space $\mathcal{A}$ is a hyperbolic $4$-manifold with eight cusps whose sections fall into two homothety classes: there are four {\em mute} cusps and four {\em non-mute} cusps.
Moreover, it has two disjoint totally geodesic boundary components $X$ and $Y$, each isomorphic to the complement of the minimally twisted $6$-chain link of Figure \ref{6chain}.
\begin{figure}[ht]
\centering
\makebox[\textwidth][c]{
\includegraphics[width=0.39\textwidth]{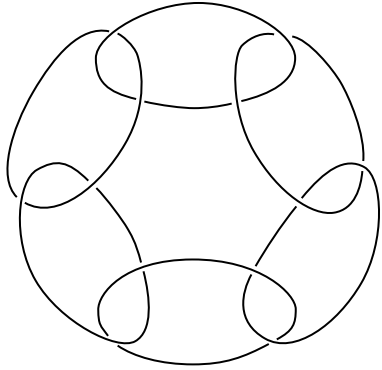}}
\caption{The minimally twisted $6$-chain link. The boundary of $\mathcal{A}$ consists of two disjoint copies of its exterior.}\label{6chain}
\end{figure}
\end{prop}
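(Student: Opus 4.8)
The plan is to establish the three assertions—that $\mathcal{A}$ is a hyperbolic $4$-manifold with totally geodesic boundary, that it has eight cusps falling into two homothety types, and that its boundary is two copies of the minimally twisted $6$-chain link complement—in turn, since each reduces to tracking the combinatorics of a single ideal $24$-cell under the two pairing maps.

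First I would verify that $\mathcal{A}$ carries a complete hyperbolic structure with totally geodesic boundary. Both pairing maps $v\mapsto -v$ and $H$ are isometries of $\mathbb{H}^4$ preserving $\mathcal{C}$ and respecting the colouring—the antipodal map sends green facets to green facets, and $H$ sends red facets to red facets because flipping the signs of two coordinates preserves the parity of the number of minus signs—so every facet identification is isometric. Following the criterion recalled in Section 2, it then remains to check the link conditions: interior ridges carrying $\pi/2$ dihedral angles must close up four-to-a-ridge to total $2\pi$, the triangular faces of the unpaired blue facets must glue to a genuine $3$-manifold, and each ideal vertex link must close up to a complete Euclidean $3$-manifold. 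Since all dihedral angles of $\mathcal{C}$ equal $\pi/2$ and the vertex figure is a cube, these verifications are mechanical; the blue facets, lying in totally geodesic hyperplanes and identified only along triangular $2$-faces by isometries, provide the totally geodesic boundary.

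Next I would count and classify the cusps, which are the orbits of the $24$ ideal vertices under the equivalence generated by $v\mapsto -v$ and by $H$ acting on vertices lying on red facets. In the $\pm,0$ labelling $H$ flips the signs in the first two coordinates; working modulo the global sign change already forced by the green pairing, $H$ fixes the four cusps whose two non-zero entries sit in positions $\{1,2\}$ or $\{3,4\}$ and merges the remaining eight cusps in pairs, giving $4+4=8$ cusps. Counting vertices per orbit ($2$ for each fixed cusp, $4$ for each merged cusp) accounts for all $24$ vertices and shows the sections are assembled from the cubical vertex figures—two cubes in the first case and four in the second. Computing the induced identifications on these cubes then exhibits the two homothety classes, the \emph{mute} and the \emph{non-mute} cusps.

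Finally, and here the real work lies, I would identify the boundary. After the red pairing only the blue strata remain, so the boundary is built from the eight blue octahedra glued along triangular faces; tracing the resulting gluing graph should show it splits into exactly two components $X$ and $Y$, each a union of four regular ideal octahedra, hence of volume $4\,v_{\mathrm{oct}}$. To recognise each as the minimally twisted $6$-chain link complement, I would record the face-pairing pattern of its four octahedra and match it against the known decomposition of that complement into four regular ideal octahedra—equivalently, reconstruct the chain-link diagram directly from the octahedral gluing. As both sides are then geometric decompositions into isometric regular ideal octahedra with identical combinatorial pairings, Mostow rigidity promotes a combinatorial isomorphism of the canonical cell decompositions to an isometry. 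I expect this identification to be the main obstacle: the cusp number ($6$), the volume, and the homology are easy invariants to compare, but certifying the isometry type rigorously requires producing the explicit combinatorial match (or a computer-assisted verification) rather than merely matching invariants.
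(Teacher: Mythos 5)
Your proposal is correct in outline and, for the first two assertions, follows essentially the paper's own route: the paper verifies the hyperbolic structure by checking that the cusp sections glue up to Euclidean $3$-manifolds (the Thurston criterion recalled in Section 2), and it counts the cusps exactly as you do --- the four classes $[(+,+,0,0)]$, $[(+,-,0,0)]$, $[(0,0,+,+)]$, $[(0,0,+,-)]$ are fixed by $H$ and become the mute cusps, while the remaining eight classes merge in pairs into the four non-mute cusps with section $T\times I$; the paper additionally works out explicit cube fundamental domains, showing the mute cusps themselves split into two isometry types (a twisted $I$-bundle over a Klein bottle and $M\times S^1$ with $M$ a M\"obius band), consistent with your sketch. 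The divergence is in the last step, which you rightly single out as the real work --- but the paper does none of the heavy lifting you plan. It observes that each boundary component is exactly the \emph{double mirroring} of a single regular ideal octahedron with its red/green checkerboard colouring: the antipodal pairing of the green facets acts on the blue strata as the mirror in the green $2$-faces, and $H$ acts as the mirror in the red $2$-faces. This double-mirror manifold is precisely the description of the minimally twisted $6$-chain link exterior already established by Kolpakov and Martelli, so the identification is concluded by citing \cite{martelli} rather than by matching the face-pairings of four octahedra against the Epstein--Penner decomposition or by computer verification. Your Mostow-rigidity plan would work (and Proposition \ref{isometrie} in the paper confirms the canonical decomposition consists of the four octahedra, so the matching would be meaningful), but the structural observation you miss is that the gluing pattern need not be traced octahedron by octahedron: recognising the two pairing maps as two commuting reflections on the blue strata exhibits each component as the double mirror at once, reducing your ``main obstacle'' to a known result.
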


\begin{proof}
The cusps sections of $\mathcal{A}$ are obtained by pairing with the map $H$ the faces of the cusps sections of the manifold $\mathcal{C}/\sim$, and are represented in Figure \ref{cuspide1}. 
\begin{figure}[htpb]
\centering
\includegraphics{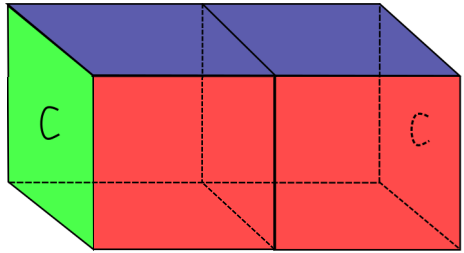}
\caption{A fundamental domain for the cusp section of the manifold $\mathcal{C}/\sim$. The opposite faces of each cube share the same colour, and the green faces are identified in pairs.}\label{cuspide1}
\end{figure} 

There are $12$ such cusps, one for each couple of opposite vertices of the $24$-cell. Each cusp is labeled with a $4$-tuple of $+,-,0$ symbols with two $0$ entries, defined up to change in signs (\emph{i.e.}\ changing every $+$ sign to a $-$ sign and every $-$ sign to a $+$ sign). To mantain a rigorous notation, we will use square parentheses to highlight that we are speaking of equivalence classes.
The cusps labeled $[(+,+,0,0)]$, $[(+,-,0,0)]$, $[(0,0,+,-)]$ and $[(0,0,+,+)]$ are fixed by the map $H$, therefore each of them corresponds to one cusp of $\mathcal{A}$. We label them respectively $m_1$, $n_1$, $m_2$ and $n_2$. 


We call these the {\em mute} cusps of $\mathcal{A}$, as they have only one boundary component, obtained by glueing together the four blue faces along the edges. This boundary component is isomorphic to a torus, obtained by identifying opposite sides of a square of sidelength two.
 
The mute cusps fall into two isometry classes. The cusps $m_1$ and $n_1$ are orientable, homeomorphic to a twisted $I$-bundle over a Klein bottle $K$. A fundamental domain is represented in Figure \ref{cuspideidentificata}, where a fundamental domain for the Klein bottle $K$ is the mid-section of Figure \ref{cuspideidentificata} parallel to the blue sides, and the boundary of the cusp is isometric to the orientable double cover of $K$.

\begin{figure}[ht]
\centering
\makebox[\textwidth][c]{
\includegraphics[width=0.5\textwidth]{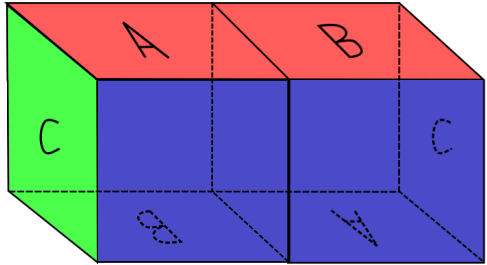}}
\caption{A fundamental domain for the orientable mute cusps of $\mathcal{A}$. The letters show the identifications between green and red faces.}\label{cuspideidentificata}
\end{figure}

The mute cusps $m_2$ and $n_2$ are non-orientable, homeomorphic to the product $M\times S^1$, where $M$ is a M\"obius band. A fundamental domain is represented in Figure \ref{cuspideidentificata2}, where a fundamental domain for the M\"obius band is the front green face labelled by ``$C$". The boundary of the cusp is isometric to $\partial{M}\times S^1$.

\begin{figure}[ht]
\centering
\makebox[\textwidth][c]{
\includegraphics[width=0.5\textwidth]{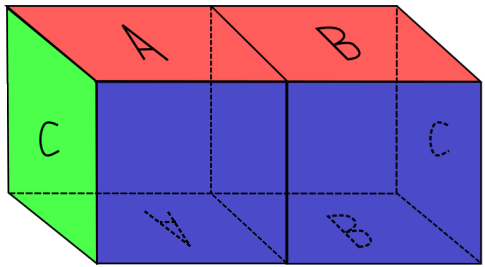}}
\caption{A fundamental domain for the non-orientable mute cusps of $\mathcal{A}$. The letters show the identifications between green and red faces.}\label{cuspideidentificata2}
\end{figure}

The other four cusps, labeled $a$, $b$, $c$ and $d$, are obtained from the following pairings between the cusps of $\mathcal{C}/\sim$, induced by the map $H$:
\begin{center}
$a:[(+,0,+,0)]\leftrightarrow[(-,0,+,0)]$,

$d:[(0,+,0,+)]\leftrightarrow[(0,-,0,+)]$,

$b:[(+,0,0,+)]\leftrightarrow[(-,0,0,+)]$,

$c:[(0,+,+,0)]\leftrightarrow[(0,-,+,0)].$
\end{center}

The resulting cusp sections are all isometric to $T\times I$, where $T$ is a torus obtained by identifying opposite sides of a square of sidelength two, and $I$ is a unit interval. A fundamental is represented in Figure \ref{cuspidedoppia}.
\begin{figure}
\centering
\makebox[\textwidth][c]{
\includegraphics[width=0.5\textwidth]{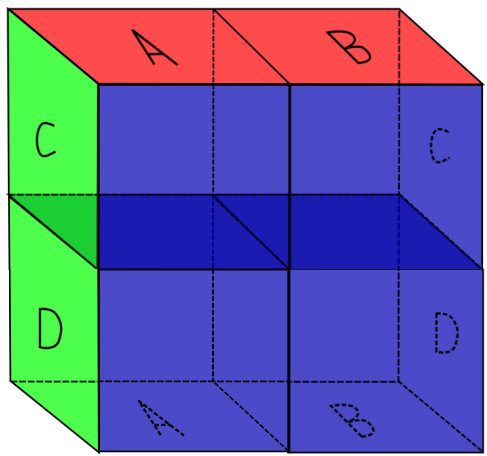}}
\caption{A fundamental domain for the non-mute cusps of $\mathcal{A}$. Letters show the identifications between green and red faces.}\label{cuspidedoppia}
\end{figure}
These cusps have two boundary components, once again isomorphic to a square torus of sidelength two.
In all the above cases, the cusp sections are Euclidean $3$-manifolds. Therefore $\mathcal{A}$ possesses a hyperbolic structure with totally geodesic boundary.

Each of the boundary components of $\mathcal{A}$ is obtained by glueing together along their faces four of the blue octahedral facets of the $24$-cell $\mathcal{C}/\sim$, and it can be described as the result of a double mirroring of a regular ideal hyperbolic octahedron, as in Figure \ref{bordi} (top), where the first mirroring is performed along the green faces and the second along the red. The mirroring on the green faces is the result of the pairing of the green octahedral facets of the $24$-cell, which is performed with the antipodal map $v\mapsto -v$. The mirroring on the red faces is the result of the pairing of the red facets, which is performed by the affine map $H$.

\begin{figure}
\centering
\makebox[\textwidth][c]{
\includegraphics[width=0.9\textwidth]{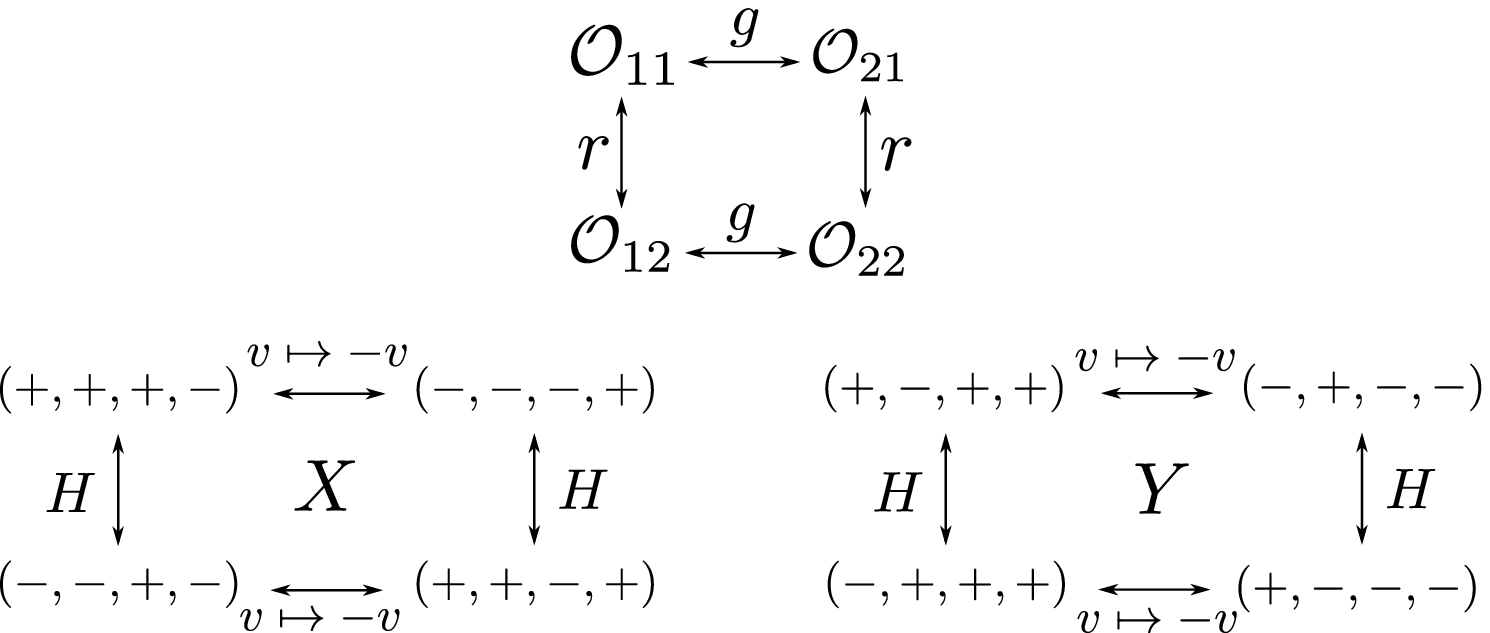}}
\caption{Presentations of the boundary components of $\mathcal{A}$ as a double mirroring of a regular ideal hyperbolic octahedron. Labels show which $3$-strata of the $24$-cell compose each boundary component. The antipodal map identifies the green faces, while $H$ identifies the red ones.}\label{bordi}
\end{figure}
As proven in \cite{martelli}, these boundary components are isomorphic to the exterior of the minimally twisted $6$-chain link, represented in Figure \ref{6chain}.
\end{proof}

The exterior $M_L$ of the minimally twisted $6$-chain link represented in Figure \ref{6chain} has two isometries $W$ and $V$, obtained by reflecting respectively in the green and the red faces. The isometry $W$ (resp.\ $V$) acts on the square diagram of Figure \ref{bordi} (top) interchanging $\mathcal{O}_{1i}$ with $\mathcal{O}_{2i}$ (resp.\ $\mathcal{O}_{i1}$ with $\mathcal{O}_{i2}$). In the case of the boundary components $X$ and $Y$ of $\mathcal{A}$, the isometries $W$ and $V$ are induced respectively by the antipodal map $v\mapsto -v$ and the affine map $E$ on $\mathcal{C}$.
\begin{prop}\label{isometrie}
Every isometry of $M_L$ preserves its decomposition as a union of four copies of the ideal hyperbolic octahedron $\mathcal{O}$. There is an exact sequence $$0\rightarrow \mathbb{Z}_2 \oplus \mathbb{Z}_2\rightarrow \text{\rm Isom}(M_L)\rightarrow \text{\rm Isom}(\mathcal{O})\rightarrow 0.$$
Where $\mathbb{Z}_2 \oplus \mathbb{Z}_2$ is the group generated by $W$ and $V$.
\end{prop}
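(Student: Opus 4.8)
The plan is to exhibit $\mathcal{O}$ as the quotient of $M_L$ by $\langle W,V\rangle$ and to let $\mathrm{Isom}(M_L)$ act on this quotient. I would begin by showing that the decomposition into four octahedra is intrinsic. Assigning equal horoball volumes to all cusps of $M_L$, the Epstein--Penner convex-hull construction produces a canonical ideal cell decomposition which is invariant under every isometry, since isometries permute the cusps and hence preserve the chosen weight system; a direct check (compare \cite{martelli}) identifies it with the given tessellation by four regular ideal octahedra $\mathcal{O}_{ij}$, $i,j\in\{1,2\}$. Consequently every $\phi\in\mathrm{Isom}(M_L)$ permutes these four cells, and it also permutes the triangular $2$-cells of the decomposition. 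These $2$-cells carry the checkerboard green/red coloring; since the coloring is determined, up to a global swap, by the combinatorics around each edge (the four faces meeting an edge alternate colors), $\phi$ either preserves or interchanges the two color classes.

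Let $\Sigma_G$ and $\Sigma_R$ be the totally geodesic surfaces formed by the green and the red $2$-cells; then $W$ and $V$ are the reflections in $\Sigma_G$ and $\Sigma_R$, and since the octahedral faces meet orthogonally these reflections commute, so $\langle W,V\rangle\cong\mathbb{Z}_2\oplus\mathbb{Z}_2$ acts simply transitively on the four cells. For any isometry $\phi$ the conjugate $\phi W\phi^{-1}$ is the reflection in $\phi(\Sigma_G)$, and by the previous paragraph $\phi(\Sigma_G)\in\{\Sigma_G,\Sigma_R\}$; likewise for $V$. Hence $\phi\langle W,V\rangle\phi^{-1}=\langle W,V\rangle$, so $\langle W,V\rangle$ is normal. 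The quotient $M_L/\langle W,V\rangle$ is the single octahedron $\mathcal{O}$ with silvered faces, and normality lets every $\phi$ descend to an isometry $\rho(\phi)\in\mathrm{Isom}(\mathcal{O})$, defining a homomorphism $\rho$. If $\rho(\phi)=\mathrm{id}$ then $\pi\circ\phi=\pi$ for the quotient map $\pi\colon M_L\to\mathcal{O}$, so $\phi$ is a deck transformation and $\phi\in\langle W,V\rangle$. Thus $\ker\rho=\langle W,V\rangle$, which yields exactness at the left and in the middle.

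It remains to prove that $\rho$ is onto. Since in the quotient both color classes become ordinary mirror faces, $\mathrm{Isom}(\mathcal{O})$ is the full symmetry group of the octahedron, and its color-preserving symmetries form an index-two subgroup. Given a color-preserving $g$, I would lift it by letting $\tilde g$ act as $g$ on each cell while fixing the indices $(i,j)$; as $g$ sends green faces to green and red to red, $\tilde g$ matches green gluings with green gluings and red with red, so it is a well-defined isometry with $\rho(\tilde g)=g$. Given a color-swapping $g$, I would instead combine $g$ with the transposition $(i,j)\mapsto(j,i)$ of the cells, which exchanges the first-index (green) gluings with the second-index (red) gluings in exactly the way $g$ exchanges the colors. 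Verifying that this combinatorial matching of face pairings is genuinely $g$-equivariant---so that the color-swapping lifts really are isometries---is the main obstacle, and it rests on the symmetry of the double-mirror construction in the two colors. Once both families of lifts are produced, $\rho$ is surjective and the short exact sequence follows.
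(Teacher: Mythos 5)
Your proposal is correct and follows essentially the same route as the paper: invariance of the Epstein--Penner decomposition under every isometry, simple transitivity of $\langle W,V\rangle$ on the four octahedra, and reduction to a single cell, with your quotient-orbifold formulation simply making explicit the homomorphism, normality and kernel checks that the paper's terse proof leaves implicit. The equivariance check you flag as the main obstacle is in fact immediate, since every face pairing in the double-mirror construction is the identity on the glued face, so both your color-preserving lifts $\tilde g$ and your color-swapping lifts composed with $(i,j)\mapsto(j,i)$ are manifestly well defined.
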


\begin{proof}
The four octahedra $O_{ij}$ represent the canonical Epstein-Penner decomposition of $M_L$, and are therefore preserved by any isometry.
Every isometry of $M_L$ is the composition of an isometry in $\mathbb{Z}_2 \oplus \mathbb{Z}_2$ with one which fixes the octahedron $\mathcal{O}_{11}$. This defines the map onto $\text{\rm Isom}(\mathcal{O}_{11})\cong \text{\rm Isom}(\mathcal{O})$. Its kernel is precisely the group generated by $W$ and $V$.
\end{proof}

\subsection{Gluing of the boundary components}
As a last step to obtain a manifold with no boundary, we need to glue the boundary components $X$ and $Y$ of $\mathcal{A}$ together using an appropriate isometry $\phi:X\rightarrow Y$. As a consequence of Proposition \ref{isometrie}, to encode $\phi$ we need to specify the choice of two octahedra $\mathcal{O}_1$ and $\mathcal{O}_2$ from $X$ and $Y$ respectively, together with an isometry from $\mathcal{O}_1$ to $\mathcal{O}_2$.
We choose $\mathcal{O}_1$ as the octahedron with labeling $(+,+,+,-)$ and $\mathcal{O}_2$ as that with labeling $(+,-,+,+)$. This choice is purely arbitrary and, as a consequence of Proposition \ref{isometrie}, does not affect the argument which will follow.


Each of the octahedra $\mathcal{O}_i$ has two opposite ideal vertices which correpond to the mute cusps of $\mathcal{A}$ and the remaining four corresponding to the cusps labeled $a$, $b$, $c$ and $d$ as in Figure \ref{ottaedro}. Notice that the boundary components of the mute cusps labeled $m_i$ belong to $X$, while those labeled $n_i$ belong to $Y$, therefore both $X$ and $Y$ have one cusp corresponding to a boundary component of an orientable mute cusp, and another cusp correponding to a boundary component of a non-orientable mute cusp of $\mathcal{A}$. Each non-mute cusp has one boundary component on $X$ and one on $Y$. Furthermore the cusps labeled $a$ and $d$ always lie on opposite vertices of the octahedra, as do those labeled $b$ and $c$.
\begin{figure}[ht]
\centering
\makebox[\textwidth][c]{
\includegraphics[width=0.6\textwidth]{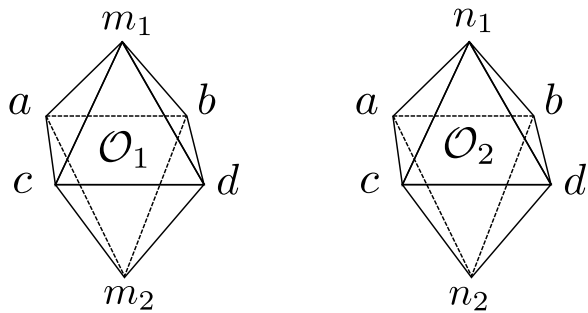}}
\caption{Model for the octahedra $O_i$. Each vertex is labeled with the cusp to which it belongs.}\label{ottaedro}
\end{figure}

To define $\phi$, we choose the map from $\mathcal{O}_1$ to $\mathcal{O}_2$ which pairs the vertices in the following way:
\begin{center}
$\mathcal{O}_1\xrightarrow{\phi} \mathcal{O}_2$

$m_1\mapsto b$

$m_2\mapsto c$

$b \mapsto d$

$c \mapsto a$

$a \mapsto n_1$

$d \mapsto n_2.$
\end{center}

\begin{prop}
Let $\mathcal{G}$ be the manifold $$\mathcal{A}/(x\sim \phi(x))$$ obtained by gluing the boundary components $X$ and $Y$ together using $\phi$ as defined above. The manifold $\mathcal{G}$ has a hyperbolic structure with two cusps. Its hyperbolic volume is equal to $v_m=4\pi^2/3$.
\end{prop}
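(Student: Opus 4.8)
The plan is to establish the three claims — existence of the hyperbolic structure, the count of two cusps, and the volume — in turn, leaning on the already-proven fact that $\mathcal{A}$ is a finite-volume hyperbolic $4$-manifold whose totally geodesic boundary is the disjoint union $X\sqcup Y$, together with the fact that $\phi\colon X\to Y$ is an isometry. First I would treat the hyperbolic structure away from the cusps. Because $X$ and $Y$ are totally geodesic and $\phi$ is an isometry, the metric extends real-analytically across the gluing locus: near a point of $X$ the manifold $\mathcal{A}$ is modelled on a half-ball of $\mathbb{H}^4$ bounded by a totally geodesic copy of $\mathbb{H}^3$, the same holds near its $\phi$-image on $Y$, and $\phi$ matches the two half-balls into a genuine ball of $\mathbb{H}^4$. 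No codimension-two condition need be re-checked, as it is inherited from the structure already present on $\mathcal{A}$; thus $\mathcal{G}$ carries a hyperbolic metric on the complement of its cusps.

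Next I would verify completeness and finiteness of volume, which (following chapter $4$ of \cite{notes}) amounts to checking that the cusp sections of $\mathcal{A}$ assemble under $\phi$ into \emph{closed} Euclidean $3$-manifolds. The key observation is that every boundary component of every cusp section of $\mathcal{A}$ is an isometric square torus of side length two, so $\phi$, being an isometry, glues these boundary tori to one another by isometries. Consequently the flat pieces — the twisted $I$-bundle over a Klein bottle for $m_1,n_1$, the product $M\times S^1$ for $m_2,n_2$, and the slab $T\times I$ for $a,b,c,d$ — are glued along isometric boundary tori, producing flat $3$-manifolds with no remaining free boundary, hence closed and complete.

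I would then count the cusps by tracing the identifications $\phi$ induces on the boundary tori. A mute cusp contributes a single boundary torus, while each non-mute cusp contributes two (one on $X$, one on $Y$). The vertex assignments $m_1\mapsto b$, $b\mapsto d$, $d\mapsto n_2$ chain the sections of $m_1,b,d,n_2$ into a single cap--tube--tube--cap configuration, and likewise $m_2\mapsto c$, $c\mapsto a$, $a\mapsto n_1$ chain $m_2,c,a,n_1$; since every boundary torus is used exactly once, the eight cusps of $\mathcal{A}$ merge into exactly two cusps of $\mathcal{G}$. The volume is then immediate: $\mathcal{G}$ is obtained from a single ideal $24$-cell $\mathcal{C}$ by pairing all of its facets, and gluing along the measure-zero boundary changes nothing, so $\mathrm{vol}(\mathcal{G})=\mathrm{vol}(\mathcal{A})=\mathrm{vol}(\mathcal{C})=4\pi^2/3=v_m$.

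I expect the cusp analysis to be the main obstacle. The formal gluing argument for the interior is routine once totally geodesic boundary and an isometry are in hand, but its real content rests on checking \emph{explicitly}, from the fundamental domains for the cusp sections, that all the relevant boundary tori are genuinely isometric square tori of side two, so that $\phi$ glues them without any shearing or rescaling — a mismatch there would destroy completeness. Keeping track of which boundary torus lies on $X$ versus $Y$, and confirming that the chaining above leaves no free boundary, is the step most prone to error and the one I would carry out with greatest care.
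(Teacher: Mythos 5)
Your proposal is correct and follows essentially the same route as the paper: the hyperbolic structure is inherited because $\phi$ is an isometry between the two totally geodesic boundary components, the eight cusps of $\mathcal{A}$ concatenate along their boundary tori into two chains of four ($m_1$--$b$--$d$--$n_2$ and $m_2$--$c$--$a$--$n_1$, matching the paper's Figure of the cusps of $\mathcal{G}$, with mute cusps as caps and non-mute cusps as tubes), and the volume equals that of the single ideal $24$-cell, namely $v_m=4\pi^2/3$. Your extra care about the boundary tori being isometric square tori of side two is a sound elaboration of what the paper established in the earlier proposition on $\mathcal{A}$, not a departure from its argument.
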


\begin{proof}
The hyperbolic structure of $\mathcal{G}$ is inherited from that of $\mathcal{A}$, since we are glueing by an isometry its two disjoint totally geodesic boundary components. The effect of this glueing is to concatenate the cusps of $\mathcal{A}$ along their boundary components in two groups of four, with the mute cusps at the ends and the non-mute cusps in the middle, as in Figure \ref{duecuspidi}.

\begin{figure}[ht]
\centering
\makebox[\textwidth][c]{
\includegraphics[width=0.4\textwidth]{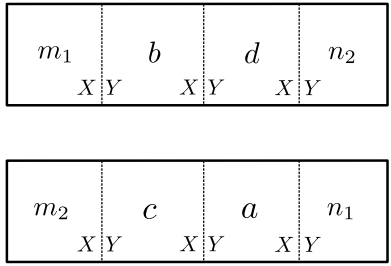}}
\caption{Cusps of $\mathcal{G}$ as a result of glueing the cusps of $\mathcal{A}$. The letters in the middle of the squares indicate the cusps of $\mathcal{A}$. Dotted lines indicate their boundary components.}\label{duecuspidi}
\end{figure}
Since $\mathcal{G}$ is obtained by pairing the facets of one regular ideal hyperbolic $24$-cell, it has exactly the same volume $v_m=4\pi^2/3$.
\end{proof}

\begin{oss}
The homeomorphism class of a closed Euclidean $3$-manifold is determined by its orientabilty and by its first homology group, as shown in \cite[pag.\ 122]{wolf}. 
The cusp sections of $\mathcal{G}$ are obtained by identifying the orientable mute cusp $A=K\simtimes I$ with the non-orientable mute cusp $B=M\times S^1$ along their boundary tori. By computing the glueing map and applying the Mayer-Vietoris sequence to the pair $(A,B)$, it is easy to see that
the first homology group of both cusp sections is isomorphic to $\mathbb{Z}\times \mathbb{Z}/4\mathbb{Z}$, therefore the cusps are of topological type $\mathcal{B}_4$. 

In the orientable double cover $\widetilde{\mathcal{G}}$ of $\mathcal{G}$, the cusps sections of $\mathcal{G}$ lift to their orientable double covers, which is obtained by mirroring the orientable mute cusp in its boundary. The first homology group of the resulting manifold is isomorphic to $\mathbb{Z}\times \mathbb{Z}/2\mathbb{Z}\times \mathbb{Z}/2\mathbb{Z}$, so these cusp sections are of topological type $\mathcal{G}_2$.
\end{oss}

\section{A one-cusped hyperbolic 4-manifold}\label{1cuspide}
We now proceed to construct the hyperbolic $4$-manifold of Theorem \ref{2minvol}.
We begin by considering two copies $\mathcal{C}_1$ and $\mathcal{C}_2$ of the regular ideal hyperbolic $24$-cell $\mathcal{C}$, with opposite orientations, and we glue them together along their {\em green} facets via the map induced by the identity on $\mathcal{C}$. 
We call the resulting manifold the {\em mirrored $24$-cell}, and denote it by $\mathcal{S}=\mathcal{C}_1\cup\mathcal{C}_2$.
We subsequently glue each red facet of $\mathcal{C}_i$ to the opposite face of $\mathcal{C}_i$ using the antipodal map $v\mapsto -v$.

Let us call $\mathcal{D}$ the resulting nonorientable space. 
\begin{prop}
The space $\mathcal{D}$ is a hyperbolic manifold with twelve cusps whose sections are all isometric to each other. 
It has four disjoint totally geodesic boundary components, each isomorphic to the exterior of the minimally twisted $6$-chain link of Figure \ref{6chain}.
\end{prop}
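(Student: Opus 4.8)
The plan is to reproduce, for $\mathcal{D}$, the scheme already used for $\mathcal{A}$: check that the cusp sections are Euclidean $3$-manifolds (now \emph{with} boundary, the blue faces being free) so that, following chapter~$4$ of \cite{notes}, $\mathcal{D}$ carries a hyperbolic structure with totally geodesic boundary, then count the cusps and recognize the boundary. First I would determine the ideal-vertex identifications. The green gluing, induced by the identity, identifies $(\mathcal{C}_1;w)$ with $(\mathcal{C}_2;w)$ for every vertex $w$, while the red gluing, induced by $v\mapsto -v$, identifies $(\mathcal{C}_i;w)$ with $(\mathcal{C}_i;-w)$ inside each copy; composing them, the class of $w$ is $\{(\mathcal{C}_1;\pm w),(\mathcal{C}_2;\pm w)\}$, indexed by the antipodal pair $\{w,-w\}$. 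Since the $24$-cell has $24$ vertices this gives $24/2=12$ cusps.

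Each cusp section is then assembled from the four cubical vertex figures sitting at these four ideal vertices, glued in a $4$-cycle: two consecutive cubes are identified along a pair of opposite green faces, the next pair along opposite red faces, while the two blue faces of every cube are left free on $\partial\mathcal{D}$. Putting on each cube the three coordinates dual to the green, red and blue pairs of opposite facets, I would check that the identity gluing along the green faces and the antipodal gluing along the red faces both fix the blue coordinate and act by translation on the remaining two. Consequently the four cubes glue up to $T^2\times I$: the green and red identifications close the two ``horizontal'' coordinates into a flat torus tiled by four squares, while the blue coordinate is a genuine product interval, the two boundary tori $T^2\times\partial I$ lying on $\partial\mathcal{D}$. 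Thus the sections are Euclidean, $\mathcal{D}$ is hyperbolic with totally geodesic boundary, and each cusp has exactly two boundary tori.

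To show that the twelve sections are mutually isometric I would exhibit a symmetry group of the construction acting transitively on the antipodal pairs. The signed coordinate permutations with an even number of sign changes, i.e.\ the Weyl group $W(D_4)$ of order $192$, preserve each colour class separately and commute with both gluing maps (the identity, and the central map $v\mapsto -v$), so they descend to isometries of $\mathcal{D}$ permuting the cusps. Since $W(D_4)$ is already transitive on the $24$ vertices of the $24$-cell (the roots of $D_4$), it is transitive on the $12$ antipodal pairs, whence all cusp sections are isometric.

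Finally I would treat the boundary, formed by the $16$ blue octahedra, eight per copy. Exactly as for $\mathcal{A}$, the green gluing mirrors a blue octahedron $P$ of $\mathcal{C}_1$ onto its copy in $\mathcal{C}_2$ across their green triangular faces, and the antipodal red gluing mirrors $P$ onto $-P$ across their red triangular faces; each boundary component therefore collects the four octahedra $P^{(1)},P^{(2)},(-P)^{(1)},(-P)^{(2)}$ attached as the double mirroring of Figure~\ref{bordi}, which by \cite{martelli} is the exterior of the minimally twisted $6$-chain link. The eight blue facets form four antipodal pairs, giving four boundary components. I expect the main difficulty to be the bookkeeping in the two tracing arguments — confirming that the cube gluings close up to $T^2\times I$ with no hidden shear or orientation flip, and that the blue octahedra group precisely into the $2\times 2$ mirrored blocks rather than into larger or smaller pieces — but both rest on the single fact that the identity and the antipodal map preserve the three-colour partition and act compatibly on the vertex figures, the same mechanism that made the analysis of $\mathcal{A}$ work.
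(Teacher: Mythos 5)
Your proposal is correct and follows essentially the same route as the paper: twelve cusps indexed by antipodal vertex pairs, cusp sections recognized as $T\times I$ (a square torus of side two times a unit interval) assembled from the four vertex-figure cubes with blue faces free, hence a hyperbolic structure with totally geodesic boundary via \cite{notes}, and the boundary grouped into four components, each a double mirroring of the regular ideal octahedron and thus the minimally twisted $6$-chain link exterior by \cite{martelli}. Your $W(D_4)$ transitivity argument is a correct but redundant addition, since the direct computation already exhibits every section as the same flat $T\times I$.
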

 
\begin{proof}
The cusp sections of $\mathcal{D}$ are obtained by glueing along their red boundary faces the cusp sections of the mirrored $24$-cell $\mathcal{S}$. The glueing is induced by the antipodal map on each $\mathcal{C}_i$. The resulting cusp section is represented in figure \ref{cuspidedoppia2}.
\begin{figure}
\centering
\makebox[\textwidth][c]{
\includegraphics[width=0.5\textwidth]{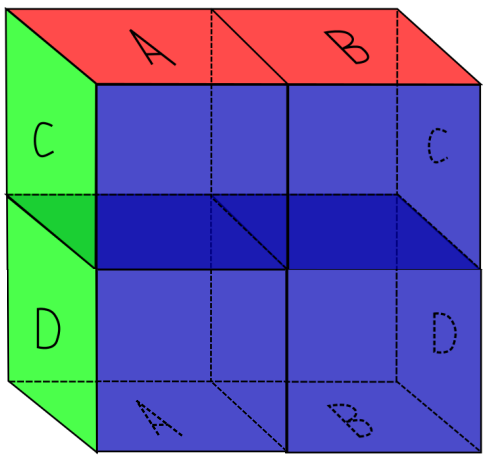}}
\caption{A fundamental domain for the cusp sections of $\mathcal{D}$. Letters show the identifications between green and red faces.}\label{cuspidedoppia2}
\end{figure}

There are  $12$ such cusps, one for each pair of opposite vertices of the $24$-cell, naturally labeled by a $4$-tuple of $0,+,-$ symbols with two zero entries, defined modulo changes in signs (changing every $+$ sign to a $-$ sign and every $-$ sign to a $+$ sign). To simplify notations we will use the following labelings:
\begin{center}
$a_1=[(+,+,0,0)]$, $a_2=[(+,-,0,0)]$,

$d_1=[(0,0,+,+)]$, $d_2=[(0,0,+,-)]$,

$b_1=[(+,0,+,0)]$, $b_2=[(+,0,-,0)]$,

$e_1=[(0,+,0,+)]$, $e_2=[(0,+,0,-)]$,

$c_1=[(+,0,0,+)]$, $c_2=[(+,0,0,-)]$,

$f_1=[(0,+,+,0)]$, $f_2=[(0,1,-1,0)]$.
\end{center}
The cusps sections are Euclidean $3$-manifolds, isometric to a product $T\times I$, where $T$ is a square torus of sidelength two and $I$ is a unit interval. Therefore $\mathcal{A}$ possesses a hyperbolic structure with totally geodesic boundary.

Each boundary $3$-stratum is obtained from the gluings of four of the blue octahedral facets of the $24$-cells $\mathcal{C}_1$ and $\mathcal{C}_2$ along their triangular $2$-faces, and is representable as a double mirroring of an octahedron with a red/green checkerboard coloring as in Figure \ref{quattrobordi}. 
Indeed, the mirroring on the green faces glues a blue octahedral facet $O_1$ of $\mathcal{C}_1$ to the correponding facet $O_2$ of $\mathcal{C}_2$,
while the mirroring on the red faces glues a blue octahedral facet $O_i$ to its opposite $-O_i$ in $\mathcal{C}_i$.
These boundary components are naturally labeled as the {\em blue} facets of the $24$-cell, \emph{i.e.}\ with a $4$-uple of $+,-$ symbols, defined up to change in signs (changing every $+$ sign to a $-$ sign and every $-$ sign to a $+$ sign). To simplify notations, we will use the following labelings:
\begin{center}
$B_1=[(-,+,+,+)]$, $B_2=[(+,-,+,+)]$,

$B_3=[(+,+,-,+)]$, $B_4=[(+,+,+,-)]$.
\end{center}

\begin{figure}[ht]
\centering
\makebox[\textwidth][c]{
\includegraphics[width=0.22\textwidth]{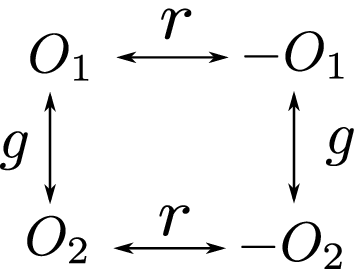}}
\caption{Presentation of a boundary component of $\mathcal{D}$. It is made up of blue facets $O_1$ and $O_2$ from $\mathcal{C}_1$ and $\mathcal{C}_2$, both corresponding to the same facet $O$ of $\mathcal{C}$, together with their opposites $-O_1$ and $-O_2$.}\label{quattrobordi}
\end{figure}
\end{proof}

\subsection{Glueing of the boundary components}
We now need to identify in pairs the boundary components $B_i$, for $i=1,\dots,4$ to produce a manifold with no boundary.
We choose to pair $B_3$ with $B_1$ via an isometry $\phi_1$ and $B_4$ with $B_2$ via an isometry $\phi_2$. As in the previous section, to define the maps $\phi_i$, $i=1,2$, we must choose one octahedron from the domain and one from the codomain, and then specify how we pair them. As a consequence of Proposition \ref{isometrie}, the first choice does not affect the argument which will follow. We can choose, for example, for $\phi_1$ (resp.\ $\phi_2$) to pair the octahedral facet of $\mathcal{C}_1$ labeled $(-,+,+,+)$ (resp. $(+,-,+,+)$) with the facet of $\mathcal{C}_1$ labeled $(+,+,-,+)$ (resp.\ $(+,+,+,-)$).
These octahedra have ideal vertices belonging to different cusps of $\mathcal{D}$ as shown in Figure \ref{4ottaedri}. 

\begin{figure}[ht]
\centering
\makebox[\textwidth][c]{
\includegraphics[width=0.5\textwidth]{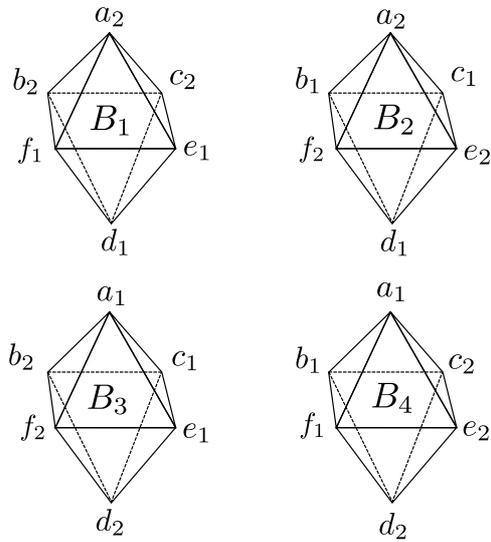}}
\caption{Correspondence between cusps of $\mathcal{D}$ and ideal vertices of the octahedra which constitute the boundary components $B_i$.}\label{4ottaedri}
\end{figure}

To choose the glueing maps $\phi_1$ and $\phi_2$, we choose the following pairings between the vertices:
\begin{center}
$B_4\xrightarrow{\phi_2} B_2\;\;B_3\xrightarrow{\phi_1} B_1$

$a_1\mapsto c_1\;\;\;\;a_1\mapsto b_2$

$d_2\mapsto f_2\;\;\;\;d_2\mapsto e_1$

$c_2\mapsto e_2\;\;\;\;b_2\mapsto f_1$

$f_1\mapsto b_1\;\;\;\;e_1\mapsto c_2$

$b_1\mapsto a_2\;\;\;\;f_2\mapsto a_2$

$e_2\mapsto d_1\;\;\;\;c_1\mapsto d_1$
\end{center}

\begin{prop}\label{acca}
Let $\mathcal{H}$ be the non-orientable manifold $$\mathcal{D}/(x\sim\phi_i(x))$$ obtained by glueing in pairs the boundary components $B_i$ using $\phi_1$ and $\phi_2$ as defined above. $\mathcal{H}$ has a hyperbolic structure with total volume equal to $2\cdot v_m$ and one toric cusp.
\end{prop}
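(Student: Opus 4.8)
The plan is to establish the three assertions of Proposition \ref{acca} in turn: hyperbolicity, the volume, and the single toric cusp, the last being the substantial one.

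First, hyperbolicity and the volume follow exactly as in the two-cusped construction. Since $\mathcal{D}$ carries a complete hyperbolic structure with totally geodesic boundary, and the maps $\phi_1,\phi_2$ are isometries between the disjoint boundary components $B_i$, the hyperbolic structures on either side of each gluing locus agree along a totally geodesic wall; hence the structure descends to $\mathcal{H}$. Because $\mathcal{H}$ is assembled from exactly two copies of the regular ideal hyperbolic $24$-cell, its volume is $2\cdot v_m$. Completeness will follow once the cusp cross-sections are shown to be closed flat $3$-manifolds, which is part of the cusp analysis below.

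Second, I would count the cusps by tracing how the boundary identifications concatenate the twelve cusps of $\mathcal{D}$. Each cusp section of $\mathcal{D}$ is a copy of $T\times I$, hence has exactly two boundary tori, and from the labelings of Figure \ref{4ottaedri} each cusp meets exactly two of the four components $B_i$. The maps $\phi_1\colon B_3\to B_1$ and $\phi_2\colon B_4\to B_2$ identify these boundary tori according to the prescribed vertex correspondences. I would record this as a graph on the twelve cusps, with one edge per torus identification; every cusp then has degree two, so the graph is a union of cycles. Following the cycle from $a_1$, alternately applying the two gluings, one obtains the single $12$-cycle
$$a_1\to b_2\to f_1\to b_1\to a_2\to f_2\to d_2\to e_1\to c_2\to e_2\to d_1\to c_1\to a_1,$$
which visits all twelve cusps. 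Hence $\mathcal{H}$ has exactly one cusp.

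Third, and this is the crux, I must show that this cusp is toric. Since every piece is a copy of $T\times I$ glued end to end around a cycle, the cusp section is a $T^2$-bundle over $S^1$, namely the mapping torus of the composition $A$ of the twelve gluing maps restricted to the square torus. The section is $T^3$ precisely when $A$ is trivial in the mapping class group of $T^2$, equivalently when its linear part acts as the identity on $H_1(T^2)\cong\mathbb{Z}^2$. I would fix a basis of $H_1(T^2)$ on one cusp, express the restriction of each $\phi_i$ to the relevant boundary torus as an element of $\mathrm{GL}_2(\mathbb{Z})$—read off from how the faces of the cube cross-section of Figure \ref{cuspidedoppia2} are matched, these matchings descending from the octahedral symmetries of Proposition \ref{isometrie}—and compose these twelve linear maps around the cycle. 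Showing the product equals the identity proves the section is $T^3$; equivalently, one computes $H_1$ of the section, finds it to be $\mathbb{Z}^3$, and concludes the section is $T^3$ by Wolf's classification of closed flat $3$-manifolds \cite{wolf}.

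The main obstacle is precisely this monodromy computation. The delicate point is translating the combinatorial vertex data defining $\phi_1,\phi_2$ into the explicit isometries of the square torus they induce—keeping track of both the $\mathrm{GL}_2(\mathbb{Z})$ linear part and the orientation behavior—and then composing these consistently around the full twelve-step cycle. The bookkeeping is finite but must be carried out with care, and confirming that the total monodromy is genuinely trivial, rather than of finite order as in the two-cusped example, is where the real work lies.
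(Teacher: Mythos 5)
Your setup is correct and follows the paper's strategy: the hyperbolic structure descends because $\phi_1,\phi_2$ are isometries between totally geodesic boundary components, the volume is $2\cdot v_m$ by counting $24$-cells, and the cusp count proceeds by viewing the twelve $T\times I$ cusp sections of $\mathcal{D}$ as vertices of a degree-two graph whose edges are the torus identifications induced by $\phi_1$ and $\phi_2$. Your $12$-cycle $a_1\to b_2\to f_1\to b_1\to a_2\to f_2\to d_2\to e_1\to c_2\to e_2\to d_1\to c_1\to a_1$ checks out against the stated vertex pairings (alternating $\phi_1$- and $\phi_2$-edges, traversed in both directions where needed), and it is exactly the concatenation the paper records in Figure \ref{trenino}. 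Your reduction of ``toric'' to the triviality of the composite monodromy of the twelve gluing isometries on the square torus is also the right criterion: the section is a flat $T^2$-bundle over $S^1$, and it is $T^3$ precisely when the total holonomy is a translation.

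The gap is that you stop exactly at the decisive step: you describe how one would compute the monodromy but never compute it, and you say yourself that this is ``where the real work lies.'' This is not a dispensable verification. Flat $T^2$-bundles over $S^1$ include genuinely non-toric possibilities, and the paper's own two-cusped example of Section \ref{2cuspidi} shows these actually arise in this family of constructions: there the analogous concatenation produces cusp sections of type $\mathcal{B}_4$, with holonomy of order four, not $T^3$. So the assertion ``one toric cusp'' is precisely the claim that the composite of the twelve isometries is trivial, and nothing in your argument rules out a finite-order or orientation-reversing total holonomy. The paper closes this step concretely: it tracks the face identifications of the cube cross-sections through all twelve gluings (the vertical edges in Figure \ref{trenino}) and reads off that the cusp section is the torus obtained by identifying opposite faces of a $2\times2\times12$ rectangular parallelepiped, i.e.\ the ends of the chain are matched by a translation. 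Until you carry out the analogous bookkeeping — extracting from each vertex pairing, via Proposition \ref{isometrie}, the induced isometry of the boundary torus of Figure \ref{cuspidedoppia2} and composing around your cycle — the proposition is set up but not proved.
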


\begin{proof}
The hyperbolic structure of $\mathcal{D}$ induces one on $\mathcal{H}$. The $12$ cusp sections of $\mathcal{D}$ are concatenated along their boundary components to form the only cusp of $\mathcal{H}$ as in Figure \ref{trenino}.

\begin{figure}[ht]
\centering
\makebox[\textwidth][c]{
\includegraphics[width=\textwidth]{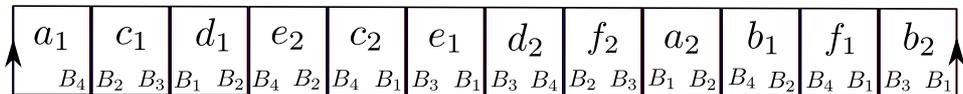}}
\caption{The cusp of $\mathcal{H}$ as concatenation of the cusps of $\mathcal{D}$. Vertical edges mark the identifications induced on the boundary components $B_i$ by the maps $\phi_1$ and $\phi_2$. The cusp shape is that of a torus obtained by identifying opposite faces of a $2\times2\times 12$ rectangular parallelepiped.}\label{trenino}
\end{figure}
The volume is $2\cdot v_m$, because $\mathcal{H}$ is obtained by pairing the facets of two copies of the regular ideal hyperbolic $24$-cell.
\end{proof}

\newpage


\begin{thebibliography}{}
\bibitem{ratcliffetschanz2} J. G. Ratcliffe, S. T. Tschantz: {\em The volume spectrum of hyperbolic $4$-manifolds},
Experimental J. Math. 9 (2000) 101-125
\bibitem{martelli} A. Kolpakov, B. Martelli: {\em Hyperbolic $4$-manifolds with one cusp}, Geom. \& Funct. Anal. 23 (2013), 1903-1933
\bibitem{notes} W. P. Thurston: {\em The Geometry and Topology of $3$-manifolds}, mimeographed notes, Princeton, (1979) 531-540
\bibitem{wolf} J. A. Wolf: {\em Spaces of Constant Curvature}, AMS Chelsea Publishing (1967)
\end{thebibliography}
\end{document}